\documentclass[11pt]{article}
\usepackage{amsmath,amssymb,amsthm}

\topmargin=  -1.0cm
\oddsidemargin=-0.5cm
\evensidemargin=-0.5cm
\textheight=24.50cm
\textwidth=17.2cm

\setlength{\baselineskip}{16pt}

\newtheorem{thm}{\noindent Theorem}[section]
\newtheorem{lmm}{\noindent Lemma}[section]
\newtheorem{crl}{\noindent Corollary}[section]

\newtheorem{open}{\noindent Open problem}

\newtheorem{rem}{\noindent Remark}[section]

\begin{document}
\title{Topics and problems on favorite sites of random walks}
\author{Izumi Okada}
\date{\empty}
\maketitle


\begin{abstract}
In this article, we study special points of a simple random walk and a Gaussian free field, such as (nearly) favorite points, late points  and high points. 
In section $2$, we extend results of \cite{okada1} for $d\ge 3$ and suggest open problems for $d=2$. 
In section $3$, we  give a survey on the geometric structures of (nearly) favorite points and late points of a simple random walk and high points of a Gaussian free field  in two dimension. 
\end{abstract}

\section{Introduction}
In this paper, we study properties of the local time of a random walk and special sites among a random walk range or a Gaussian free field. 
Major parts of this paper are survey and contains some minor extensions of existing results. 
As one of the motivation of this study, we are interested in the relation between the local time of a random walk and a Gaussian free field. 
As a well known result,  the generalized second Ray-Knight theorem makes the local time of  random walk and a Gaussian free field closely linked as follows. 
To state this theorem, we consider a reversible continuous-time random walk $\{S_t\}_{t\ge0 }$ on a graph $G$, 
the total conductance (or weight) $\{\lambda_x\}_{x\in G}$ (See the definition in \cite{Sz}),   
the local time 
$$\tilde{K}(t,x):=\lambda_x^{-1}\int_0^t 1_{\{S_l=x \}} ds,$$
 and 
 $$\tilde{\tau}_t:=\inf\{s: \tilde{K}(s,0)>t\}.$$
Pick a certain point in $G$ as the origin. 
 Let $\{\phi(x)\}_{x \in G}$ be the Gaussian free field with mean zero and 
Cov$(\phi(x),\phi(y))=E^x[\tilde{K}(T_0,y)]$, 
where $T_0$ is stopping time to the origin. 
\begin{thm}[\cite{Eisen} : 
The generalized second Ray-Knight theorem]
\begin{align*}
&\text{The law of }\{\tilde{K}(\tilde{\tau}_t,x)+\frac{1}{2}\phi(x)^2 \}_{x\in G} \text{ under } P \times \mathbf{P}\\
&\text{is the same as that of }\{ \frac{1}{2}(\phi(x )+ \sqrt{2t})^2 \}_{x\in G} \text{ under }\mathbf{P},
\end{align*}
where $\mathbf{P}$ is the probability of the Gaussian free field and 
$P$ is that of the random walk starting the origin. 
\end{thm}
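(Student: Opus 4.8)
The plan is to prove the stated identity in law by showing that the two occupation fields have the same joint Laplace functional over every finite set of sites, which together with consistency determines the law of the whole field. Throughout write $G(x,y)=E^x[\tilde K(T_0,y)]$ for the Green's function of the walk killed upon hitting the origin; by hypothesis this is exactly the covariance of $\phi$, and since $G(0,\cdot)\equiv 0$ the field is pinned, $\phi(0)=0$ almost surely. This pinning matches the deterministic value $\tilde K(\tilde\tau_t,0)=t$ on the left-hand side, a first consistency check. Fix a nonnegative test function $\theta=(\theta_x)$ supported on a finite set and set $\Theta=\mathrm{diag}(\theta)$. I will compute
\[ \Phi_{\mathrm L}(\theta)=E\otimes\mathbf E\Big[\exp\Big(-\sum_x\theta_x\big(\tilde K(\tilde\tau_t,x)+\tfrac12\phi(x)^2\big)\Big)\Big], \quad \Phi_{\mathrm R}(\theta)=\mathbf E\Big[\exp\Big(-\sum_x\theta_x\tfrac12\big(\phi(x)+\sqrt{2t}\big)^2\Big)\Big], \]
and show $\Phi_{\mathrm L}=\Phi_{\mathrm R}$.

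The right-hand side is a finite-dimensional Gaussian integral with a mean shift. Completing the square, with mean vector $\mathbf m=\sqrt{2t}\,\mathbf 1$, yields the closed form
\[ \Phi_{\mathrm R}(\theta)=\det(I+G\Theta)^{-1/2}\,\exp\!\big(-t\,\mathbf 1^{\mathsf T}\Theta(I+G\Theta)^{-1}\mathbf 1\big), \]
where the determinantal factor is the Laplace transform of the unshifted Gaussian squares and the exponential factor records the shift through the resolvent $(I+G\Theta)^{-1}$; I would verify the matrix identity $\Theta-\Theta(G^{-1}+\Theta)^{-1}\Theta=\Theta(I+G\Theta)^{-1}$ to reach this form, handling with care the singular direction created by the pinning. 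For the left-hand side, independence of the walk and the field factorizes the expectation, and the Gaussian part again contributes $\det(I+G\Theta)^{-1/2}$, so it remains to compute the pure random-walk transform $E^0[\exp(-\sum_x\theta_x\tilde K(\tilde\tau_t,x))]$. Because $\tilde\tau_t$ is the inverse local time at the origin, the map $t\mapsto\tilde K(\tilde\tau_t,\cdot)$ has stationary independent increments (it is assembled from the Poisson process of excursions from $0$), so this transform has the form $\exp(-t\,\psi(\theta))$ with $\psi(\theta)=n(1-e^{-\langle\theta,\ell\rangle})$, where $n$ is the excursion measure and $\ell$ the occupation field of a single excursion; equivalently $\exp(-t\psi(\theta))$ can be obtained from Kac's moment formula by expanding in powers of $\theta$ and resumming.

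The main obstacle is the final matching, namely proving $\psi(\theta)=\mathbf 1^{\mathsf T}\Theta(I+G\Theta)^{-1}\mathbf 1$. This is a purely random-walk identity that identifies the excursion exponent with a resolvent of the killed Green operator; I would establish it by a last-exit (renewal) decomposition of the occupation field accumulated before returning to $0$, expressing $\psi$ through $u_\theta(x)=E^x[\exp(-\langle\theta,\tilde K(T_0,\cdot)\rangle)]$ and then invoking the Feynman--Kac/resolvent relation between $u_\theta$ and $(I+G\Theta)^{-1}$. The delicate points throughout are the correct bookkeeping of the pinning at the origin, which forces the $0$-row of $G$ to vanish and makes the all-ones shift reduce, at the origin, to the value $t$, and the justification that equality of finite-dimensional Laplace functionals upgrades to equality in law of the full field. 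A quick sanity check supporting the identity is the choice $\theta=s\delta_0$, for which $G\Theta=0$ and both $\Phi_{\mathrm L}$ and $\Phi_{\mathrm R}$ reduce to $e^{-st}\det(I+G\Theta)^{-1/2}$, so that the random-walk factors agree as $e^{-st}$.
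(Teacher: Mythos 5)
The paper does not prove this statement: it is Theorem 1.1, quoted from Eisenbaum--Kaspi--Marcus--Rosen--Shi \cite{Eisen} and used only as background, so the only meaningful comparison is with the proof in that reference (also exposed in Sznitman's notes \cite{Sz}). Your outline reconstructs that standard Laplace-functional argument correctly: the Gaussian computation $\Phi_{\mathrm R}(\theta)=\det(I+G\Theta)^{-1/2}\exp\bigl(-t\,\mathbf 1^{\mathsf T}\Theta(I+G\Theta)^{-1}\mathbf 1\bigr)$ is right (your matrix identity $\Theta-\Theta(G^{-1}+\Theta)^{-1}\Theta=\Theta(I+G\Theta)^{-1}$ checks out, and the pinned direction $\phi(0)=0$ is handled consistently), the independence factorization of $\Phi_{\mathrm L}$ is right, and the remaining random-walk identity you isolate, $E^0[\exp(-\langle\theta,\tilde K(\tilde\tau_t,\cdot)\rangle)]=\exp\bigl(-t\,\mathbf 1^{\mathsf T}\Theta(I+G\Theta)^{-1}\mathbf 1\bigr)$, is true and is exactly the crux of the published proofs. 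The route you sketch for it does close the gap: Feynman--Kac for the walk killed at the origin gives $u_\theta=(I+G\Theta)^{-1}\mathbf 1$, hence $\mathbf 1^{\mathsf T}\Theta(I+G\Theta)^{-1}\mathbf 1=\sum_x\theta_x u_\theta(x)$, while the last-exit/excursion decomposition (write $1-e^{-A_\zeta}=\int_0^\zeta e^{-(A_\zeta-A_s)}\,dA_s$ for the additive functional $A$ and apply the Markov property under the excursion measure) identifies $\psi(\theta)$ with the same sum; your $\theta=s\delta_0$ sanity check, or a two-point graph, confirms the normalization. Two points to make explicit in a write-up: recurrence of the walk is needed so that $\tilde\tau_t<\infty$ a.s.\ and the stationary-independent-increments structure of $t\mapsto\tilde K(\tilde\tau_t,\cdot)$ applies; and when $G$ is infinite you should state that agreement of Laplace functionals on all finite subsets determines the law of the whole field (standard, since the index set is countable and all coordinates are nonnegative). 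With those caveats, your proposal is a correct skeleton of essentially the same proof as in \cite{Eisen}.
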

A simple sorting of above yields the following.
\begin{crl}[\cite{Eisen}]
\begin{align*}
\{\frac{\tilde{K}(\tilde{\tau}_t,x)-t}{\sqrt{2t}} \}_{x\in G} 
\to \{\phi(x )\}_{x\in G} \text{ in law as } t\to \infty.
\end{align*}
\end{crl}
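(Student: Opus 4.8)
The plan is to read the Corollary directly out of the distributional identity of the Theorem by expanding the square, centering, and rescaling. First I would expand the right-hand side,
\begin{align*}
\tfrac{1}{2}\bigl(\phi(x)+\sqrt{2t}\bigr)^2=\tfrac{1}{2}\phi(x)^2+\sqrt{2t}\,\phi(x)+t,
\end{align*}
so that the Theorem reads, as processes indexed by $x\in G$,
\begin{align*}
\Bigl\{\tilde{K}(\tilde{\tau}_t,x)+\tfrac{1}{2}\phi(x)^2\Bigr\}_{x\in G}\overset{\mathrm{law}}{=}\Bigl\{\tfrac{1}{2}\phi(x)^2+\sqrt{2t}\,\phi(x)+t\Bigr\}_{x\in G}.
\end{align*}
Here both fields have the law $\mathbf P$, but on the left the field is independent of the walk (the identity holds under $P\times\mathbf P$) while on the right it is the bare field (under $\mathbf P$).

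Next I would subtract $t$ and divide by $\sqrt{2t}$. Since this is a fixed affine map applied identically to every coordinate, it is a measurable transformation of the whole process and hence preserves equality in law, giving
\begin{align*}
\Bigl\{\frac{\tilde{K}(\tilde{\tau}_t,x)-t}{\sqrt{2t}}+\frac{\phi(x)^2}{2\sqrt{2t}}\Bigr\}_{x\in G}\overset{\mathrm{law}}{=}\Bigl\{\phi(x)+\frac{\phi(x)^2}{2\sqrt{2t}}\Bigr\}_{x\in G}.
\end{align*}
The right-hand side is exactly $\{\phi(x)\}_{x\in G}$ plus the perturbation $\phi(x)^2/(2\sqrt{2t})$, which for each fixed $x$ tends to $0$ almost surely, hence in probability, as $t\to\infty$; the same therefore holds for every finite collection of coordinates. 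Thus the right-hand side converges almost surely, and so in law, to $\{\phi(x)\}_{x\in G}$.

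It then remains to strip off the matching perturbation on the left. Writing $A_t(x)=\bigl(\tilde{K}(\tilde{\tau}_t,x)-t\bigr)/\sqrt{2t}$ and $B_t(x)=\phi(x)^2/(2\sqrt{2t})$ for the left-hand field, the displayed identity says that $A_t+B_t$ has the same law as a family converging in law to $\phi$, so $A_t+B_t\to\phi$ in law. Because the left-hand field is almost surely finite at each fixed $x$, we again have $B_t\to0$ in probability. Applying Slutsky's theorem to $A_t=(A_t+B_t)-B_t$ then yields $A_t\to\phi$ in law, which is precisely the asserted convergence.

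The argument is entirely elementary, so there is no genuine analytic obstacle; the only point requiring care is the mode of convergence. For finite $G$ the statement is convergence of the full finite-dimensional law and the steps above are complete. For infinite $G$ one should interpret the convergence as convergence of finite-dimensional distributions, and I would make sure that the centering and scaling are applied coordinatewise, so that the vector form of Slutsky's theorem together with the pointwise vanishing of $B_t$ suffices and no joint control of the error over all of $G$ is needed.
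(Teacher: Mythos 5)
Your proposal is correct and is essentially the paper's own argument: the paper dismisses the Corollary with the single phrase ``a simple sorting of above yields the following,'' and your expansion of $\tfrac{1}{2}(\phi(x)+\sqrt{2t})^2$, recentering by $t$, rescaling by $\sqrt{2t}$, and removal of the $\phi(x)^2/(2\sqrt{2t})$ terms via Slutsky is exactly that sorting, carried out in full detail. The only thing you add beyond the paper is the explicit care about the mode of convergence (finite-dimensional distributions and the converging-together lemma), which is a welcome precision rather than a departure.
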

We consider that this corresponds to the central limit theorem for the local time $\tilde{K}(\tilde{\tau}_t,x)$, and hence, 
the limit of $\tilde{K}(\tilde{\tau}_t,x)$ as  the central limit theorem is the corresponding Gaussian free field. 
Then, we have the question: 
how far are the distribution of the local time from that of the Gaussian free field at each time? 
Then, we are interested in two processes and
 especially study the relation between special points of a discrete-time simple random walk in 
$\mathbb{Z}^2$ or $\mathbb{Z}^2_n$(:=$\mathbb{Z}^2/n\mathbb{Z}^2 $) 
where the local time is large or small 
and special points  in $\mathbb{Z}^2_n$ where the corresponding Gaussian free field takes large value. 
To explain properties of such points of a simple random walk in $\mathbb{Z}^d$, we give some definitions of a simple  random walk. 
(In the sequel, we consider only discrete-time simple random walk in $\mathbb{Z}^d$ (or $\mathbb{Z}^2_n$). ) 
Let $\{S_k\}_{k=0}^{\infty}$ be a simple random walk on the $d$-dimensional square lattice. 
Let $d(x,y)$ be the Euclidean distance for $x,y \in  \mathbb{Z}^d$. 
Let $D(x,r):=\{y\in \mathbb{Z}^d: d(x,y)\le r\}$ and for $G\subset \mathbb{Z}^d$, 
$\partial G:=\{y\in G: d(x,y)=1\text{ for some } x\in G^c \}$. 
Let $P^x$ denote the probability measure for the simple random walk starting at $x$. 
We simply write $P$ for $P^0$.  
Let $K(n,x)$ be the number of visits of the simple random walk to $x$ until time $n$, 
that is, $K(n,x)=\sum_{i=0}^n1_{\{S_i=x\}}$. 
For $D\subset \mathbb{Z}^d$, let 
$T_D:=\inf \{m\ge1: S_m\in D\}$. 
In particular, we write $T_{x_1,...,x_j}$ for $T_{\{x_1,...,x_j\}}$. 
Let $\tau_n:=\inf \{m\ge 0: S_m\in \partial D(0,n)\}$. 
In addition, $\lceil a \rceil$ denotes the smallest integer $n$ with $n \ge a$.

\section{Some estimates for favorite points in $\mathbb{Z}^d$ with $d\ge 2$}
We call the most frequently visited site among all of the random walk range (up to a specific time) favorite point. 
About fifty years ago, Erd\H{o}s and Taylor \cite{er} proposed a problem concerning the simple random walk in $\mathbb{Z}^d$: 
how many times does the random walk revisit the favorite point? 
In fact, \cite{er} showed
that  for the simple random walk in $d\ge3$
\begin{align*}
\lim_{n \to \infty} \frac{\max_{x\in {\mathbb Z}^d}K(n,x)}{\log n}= \frac{1}{-\log P(T_0<\infty)}\quad \text{ a.s.}
\end{align*}
In addition, for $d=2$, they obtained 
\begin{align*}
\frac{1}{4\pi} \le 
\liminf_{n \to \infty} \frac{\max_{x\in {\mathbb Z}^2}K(n,x)}{(\log n)^2}\le
\limsup_{n \to \infty} \frac{\max_{x\in {\mathbb Z}^2}K(n,x)}{(\log n)^2}\le \frac{1}{\pi} \quad \text{ a.s.},
\end{align*}
and conjectured that the limit exists and equals $1/\pi$ a.s.  
Forty years later,  Dembo et al. \cite{Dembo} verified this conjecture, 
that is, they showed that for a simple random walk in ${\mathbb{Z}^2}$,
\begin{align*}
\lim_{n \to \infty} \frac{\max_{x\in {\mathbb Z}^2}K(\tau_n,x)}{(\log n)^2}
=\lim_{n \to \infty} \frac{4\max_{x\in {\mathbb Z}^2}K(n,x)}{(\log n)^2}
= \frac{4}{\pi} \quad a.s.
\end{align*}
In addition, for $0<\alpha<1$, \cite{Dembo, Dembo2} defined the set of  $\alpha$-favorite points 
in $\mathbb{Z}^2$ such that
\begin{align*}
 \Psi_n(\alpha):=\bigg\{x: K(\tau_n,x)\ge \bigg\lceil \frac{4\alpha}{\pi} (\log n)^2 \bigg\rceil \bigg\}. 
\end{align*}
It is known that the law of large numbers for the random walk range in $\mathbb{Z}^2$ holds by \cite{jain}. 
\cite{Dembo} showed the  law of large numbers for $ \Psi_n(\alpha)$. 
After this, \cite{rosen} made another proof of results of \cite{Dembo}. 
In addition, there are many works studying properties of the favorite points for any dimension such as the number of visit to it and the location of it. 
For example, in \cite{okada1} we showed that the favorite points of the simple random walk in $\mathbb{Z}^d$ with $d\ge2$ does not appear in the boundary of the random walk range from some time on a.s. 
Additional open problems concerning geometric structure of the favorite points are raised by Erd\H{o}s and R\'ev\'esz \cite{er2, er3} and Shi and T\'oth \cite{shi} but almost no definite solution to them is known for multi-dimensional walks. 

Now, we introduce results of \cite{okada1}. 
Set $R(n) := \{S_0,S_1, \ldots, S_n\}$ as the random walk range until time $n$. 
Moreover, we set $$M(n):=\max_{x \in \partial R(n)}K(n,x).$$
The first theorem provides us with sharp asymptotic behavior of $M(n)$.
\begin{thm}[\cite{okada1}]\label{k1}
For $d\ge2$
\begin{align*}
\lim_{n \to \infty} \frac{M(n)}{\log n}=\beta_d \quad \text{ a.s.},
\end{align*}
where
\begin{align*}
\beta_d=\frac{1}{- \log P(T_0<T_b)}
\end{align*}
and $b$ is a neighbor of the origin. 
\end{thm}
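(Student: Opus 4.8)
The plan is to identify the exponential rate at which a boundary site can be revisited, and then run a first-moment argument for the upper bound and a second-moment argument for the lower bound on the number of boundary points visited at least $k$ times. The key observation is that if $x\in\partial R(n)$ then, by definition, $x$ has a neighbor $b'$ with $b'\notin R(n)$, so every one of the $K(n,x)$ visits to $x$ occurs while $b'$ is avoided. Applying the strong Markov property at successive returns to $x$, the number of returns to $x$ before the walk first hits $b'$ is geometric with parameter $p:=P^x(T_x<T_{b'})$, which by translation invariance equals $P(T_0<T_b)=e^{-1/\beta_d}$. Hence forcing $k$ visits to $x$ while $b'$ stays unvisited costs $\asymp p^{\,k}$, and balancing this against the number of candidate sites will produce the constant $\beta_d$.

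For the upper bound I set $k=k_n:=\lceil(1+\varepsilon)\beta_d\log n\rceil$ and estimate the expected number $N_k$ of sites $x\in\partial R(n)$ with $K(n,x)\ge k$. Splitting over the at most $2d$ choices of the avoided neighbor $b'$ and conditioning on the first visit to $x$, the strong Markov property gives
$$P\big(x\in\partial R(n),\,K(n,x)\ge k\big)\le 2d\,p^{\,k-1}\,P\big(x\in R(n)\big).$$
Summing over $x$ and using $\sum_x P(x\in R(n))=E|R(n)|\le n+1$ yields $E[N_{k_n}]\le 2d\,p^{\,k_n-1}(n+1)=O(n^{-\varepsilon})$, so $P(M(n)\ge k_n)\le E[N_{k_n}]\to0$. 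To upgrade this to an almost sure bound I pass to a subsequence $n_j$ and apply Borel--Cantelli; the only nuisance is that $M(n)$ is not monotone, since a site may leave $\partial R(n)$, which I circumvent by bounding $\sup_{n\le n_{j+1}}M(n)$ via the monotone quantity recording, for each $x$, the largest $k$ such that the $k$-th visit to $x$ precedes the first visit to any neighbor of $x$.

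The lower bound is the substantive part. With $k=\lceil(1-\varepsilon)\beta_d\log n\rceil$ I again study $N_k$, now seeking a matching lower bound $E[N_k]\to\infty$ together with a second-moment bound $E[N_k^2]\le C\,(E[N_k])^2$. For the first moment I restrict $x$ to a region reached with good probability (radius $\sqrt n$ for $d=2$, comparable to $n$ for $d\ge3$), force the walk to reach $x$, make $k-1$ returns while avoiding a fixed neighbor $b'$ (contributing $p^{\,k-1}$), and keep $b'$ unvisited up to time $n$: in $d\ge3$ this last event has probability bounded below by transience, while in $d=2$ it costs only a polylogarithmic factor by a capacity/harmonic-measure estimate, so $E[N_k]\to\infty$ in both cases. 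For the second moment, well-separated pairs $x,y$ factorize once the excursions at the two sites are decoupled, and the near-diagonal terms are absorbed by the same geometric-return estimate, after which Paley--Zygmund gives $P(N_k\ge1)\ge c>0$.

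The hardest step, and where I expect the main obstacle, is promoting this positive probability into an almost sure statement. A single scale yields only a constant lower bound on $P(N_k\ge1)$, so I plan to manufacture genuine independence by letting the walk make many attempts in disjoint spatial regions (successive annuli it must cross), each attempt independently producing a boundary site visited at least $(1-\varepsilon)\beta_d\log n$ times with probability bounded away from $0$; the probability that all attempts fail is then summable along $n_j$, and Borel--Cantelli closes the argument. The recurrent case $d=2$ is the delicate one, because the constraint that $b'$ remain unvisited is only polynomially cheap and interacts with the recurrence of the walk, so establishing clean decorrelation across the regions, rather than the exponential-rate bookkeeping, which is routine, is where the real work lies.
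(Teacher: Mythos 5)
Your upper bound is sound and is essentially the paper's: the observation that every visit to a boundary point precedes the hitting time of some unvisited neighbor, the resulting geometric factor $p^{k-1}$ with $p=P(T_0<T_b)$, the bound $E|R(n)|\le n+1$, and a monotone surrogate to repair the non-monotonicity of $M(n)$. The paper does exactly this via the inclusion $R(I_0)\cap \partial R(I_2)\subset \partial R(I_1)$ for nested time intervals and by counting points with $x\in\partial R(T_x^{\lceil \beta\log (n/2)\rceil})$ (Lemma \ref{up}), which is the same device as your ``$k$-th visit to $x$ precedes $T_{b'}$'' count.

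The genuine gap is exactly where you yourself place it: the upgrade from $P(N_k\ge 1)\ge c$ to an almost sure lower bound. Your scheme --- independent attempts in disjoint spatial annuli, each succeeding with probability bounded away from zero --- fails as stated in $d=2$, because the defining event is global in time: the candidate site must keep a neighbor $b'$ unvisited up to time $n$, an event of probability $\asymp 1/\log n$ (so not bounded below per attempt), and this avoidance constraint correlates all regions, since the recurrent walk recrosses each annulus many times before time $n$; restricting an ``attempt'' to the local excursion leaves precisely the decorrelation you admit is undone. The paper (Lemmas \ref{hh+} and \ref{hh}, following Lemmas $4.2$ and $4.3$ of \cite{okada1}) resolves this by truncating in time rather than localizing in space: one demands that the $\lceil \beta n^2\rceil$ returns to a fresh point $S_l$ occur in quick succession, $T_{S_l}^{j}-T_{S_l}^{j-1}<k$, at cost $P(T_0<T_b\wedge k)$ per return (the constant $\beta_d$ is recovered by letting $k\to\infty$), so each event $A_{l,n}$ is confined to a time window of length $k\lceil\beta n^2\rceil$. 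Summing over $l$ in the long interval $I_n$ (of length of order $e^{(n-1)^2}$) produces many nearly independent trials indexed by \emph{time} along one trajectory, and the crucial output is quantitative concentration, $EQ_n\ge c\exp(h_kn^2-2n)$ together with $\mathrm{Var}(Q_n)\le C\exp(2h_kn^2-4n)\,n^{-10}$, not merely a second moment comparable to the squared mean; Chebyshev then gives summable failure probabilities along $u_n=\lceil e^{n^2}\rceil$, and interpolation via $\log u_{n+1}/\log u_n\to 1$ finishes. Without either this time-truncation/variance mechanism or an actual proof of your spatial decorrelation (which in $d=2$ must handle the $1/\log n$ avoidance factor jointly over all regions), your lower bound establishes only the easy statements ($EN_k\to\infty$ and Paley--Zygmund), while the theorem's real content --- the almost sure bound, above all in the recurrent case $d=2$ --- remains unproved.
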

To compare to $|\Psi_n(\alpha)|$, we  define $\Theta_n (\delta)$ for $n \in \mathbb{N}$ and $0<\delta<1$ as
\begin{align*}
\Theta_n(\delta):=| \{ x\in \partial R(n) :
\frac{K(n,x)}{\log n}\ge \beta_d\delta \}|.
\end{align*}
In fact, it is known that the law of large numbers for the boundary of the range of a transient random walk holds by \cite{okada}. 
Then, we have a question : does the law of large numbers for $\Theta_n(\delta)$ hold? 
The following corresponds to the answer. 
\begin{thm}[\cite{okada1}]\label{k2}
For $d\ge2$ and $0<\delta<1$,
\begin{align*}
\lim_{n \to \infty} \frac{\log \Theta_n(\delta)}{\log n}=1-\delta \quad \text{ a.s.}
\end{align*}
\end{thm}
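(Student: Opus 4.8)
The plan is to obtain matching bounds on $\log\Theta_n(\delta)/\log n$ by the first and second moment methods. The single estimate driving everything is that a fixed site is a boundary point carrying local time at least $\beta_d\delta\log n$ with probability of order $n^{-\delta}$; the exponent $1-\delta$ then emerges from pairing this with the $O(n)$ growth of the range. Write $q:=P(T_0<T_b)$, so that by definition $\beta_d=-1/\log q$ and consequently
\begin{align*}
q^{\beta_d\delta\log n}=\exp\big(\delta\log n\cdot\beta_d\log q\big)=\exp(-\delta\log n)=n^{-\delta}.
\end{align*}
This identity is what converts the local-time threshold into a power of $n$.

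For the upper bound I would argue by a first moment. Fix $x$ and set $m:=\lceil\beta_d\delta\log n\rceil$. On the event $\{x\in\partial R(n)\}$ some neighbor of $x$ lies outside $R(n)$, so a union bound over the $2d$ neighbors reduces matters to a single neighbor $b$ with $b\notin R(n)$; on this event $T_b>n$, hence $x$ is visited at least $m$ times before $T_b$. Applying the strong Markov property at the first visit to $x$ and counting returns to $x$ before $T_b$ (each occurring with probability $P^x(T_x<T_b)=q$ by translation invariance and lattice symmetry), the number of such returns is geometric, whence
\begin{align*}
P\big(x\in\partial R(n),\,K(n,x)\ge m\big)\le 2d\,P(T_x\le n)\,q^{\,m-1}.
\end{align*}
Summing over $x$ and using $\sum_x P(T_x\le n)=E|R(n)|=O(n)$ (the law of large numbers for the range; the additional $1/\log n$ factor present when $d=2$ only helps) gives $E[\Theta_n(\delta)]\le Cn^{1-\delta}$. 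Markov's inequality along a geometric subsequence together with Borel--Cantelli then yields $\Theta_{n_k}(\delta)\le n_k^{1-\delta+\varepsilon}$ eventually, and an interpolation to all $n$ finishes the upper bound a.s.

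For the lower bound I would run a second moment argument on the events $A_x:=\{x\in\partial R(n),\,K(n,x)\ge m\}$. First, a matching first-moment lower bound $E[\Theta_n(\delta)]\ge c\,n^{1-\delta}$ follows by reversing the geometric estimate for a linear-in-$n$ family of sites $x$ reached with probability bounded below: the walk may hit $x$, make $m$ returns avoiding $b$ (probability of order $q^{m}$), and then, by transience when $d\ge3$ or an escape estimate in general, leave $b$ unvisited up to time $n$ with probability bounded below. The essential step is the covariance estimate: for sites with $d(x,y)\ge n^{\gamma}$ I would show that the ``visit $x$ many times while avoiding its missing neighbor'' events decouple, so that $P(A_x\cap A_y)\le(1+o(1))P(A_x)P(A_y)$, by comparing the excursions of the walk in the two disjoint neighborhoods $D(x,r)$ and $D(y,r)$; pairs with $d(x,y)<n^{\gamma}$ are too few to matter. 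Paley--Zygmund then gives $\Theta_n(\delta)\ge c'\,n^{1-\delta}$ with probability bounded below.

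The main obstacle is to upgrade this positive-probability lower bound to an almost-sure one while keeping the decorrelation uniform, and this is also where the recurrent case $d=2$ is most delicate, since there ``keeping $b$ unvisited up to time $n$'' is far more costly than for $d\ge3$. I would handle the upgrade by decomposing the trajectory across disjoint spatial scales and invoking the Markov property to produce genuinely independent contributions, so that the count of favorite boundary points concentrates and Borel--Cantelli applies along a subsequence; the passage to all $n$ is then the same monotonicity interpolation as in the upper bound, using that $K(\cdot,x)$ is nondecreasing, that an unvisited neighbor at time $n$ is unvisited at all earlier times, and a small perturbation of $\delta$ to absorb the mismatch of boundary membership between $n$ and the subsequence endpoints.
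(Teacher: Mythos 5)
Your skeleton --- the identity $q^{\beta_d\delta\log n}=n^{-\delta}$, a first-moment upper bound via the geometric count of returns to $x$ avoiding a missing neighbor, and a second-moment lower bound with Borel--Cantelli along a subsequence plus monotonicity interpolation --- is the same two-moment approach as the paper's source \cite{okada1}, and your upper-bound estimate $P(x\in\partial R(n),\,K(n,x)\ge m)\le 2d\,P(T_x\le n)\,q^{m-1}$ is essentially Lemma \ref{up}. One bookkeeping caveat there: since $\partial R(n)$ is not monotone in $n$, the interpolation step cannot bound $\Theta_n(\delta)$ by $\Theta_{n_{k+1}}(\delta')$ directly; the paper instead bounds the modified count $\tilde{\Theta}$ in which boundary membership is tested at the stopping time $T_x^{\lceil \beta\log(n/2)\rceil}$, legitimized by the inclusion $R(I_0)\cap \partial_H R(I_2)\subset \partial_H R(I_1)$ --- which is exactly the ``unvisited neighbor at time $n$ is unvisited at all earlier times'' fact you state, so your upper bound is repairable provided you route the interpolation through the stopped count rather than $\Theta_n$ itself.

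The genuine gap is the almost-sure upgrade of the lower bound, which you yourself flag as ``the main obstacle'' but resolve only with an unspecified plan (``decomposing the trajectory across disjoint spatial scales \dots genuinely independent contributions''). Paley--Zygmund gives the bound with probability bounded below, and that requires the pairwise estimate $P(A_x\cap A_y)\le (1+o(1))P(A_x)P(A_y)$ uniformly at separation $n^{\gamma}$, which you assert rather than prove; with only $P(A_x\cap A_y)\le C\,P(A_x)P(A_y)$ you cannot even get probability tending to one, let alone a.s.\ convergence, and in the recurrent case $d=2$ (which the theorem covers) there is no transience or restarting to manufacture independence --- indeed your heuristic that the walk ``leaves $b$ unvisited up to time $n$ with probability bounded below'' is false there, since $P(T_b>n)\asymp 1/\log n$ (harmless at exponent level, but symptomatic). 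The paper's mechanism is concrete and quantitatively stronger: count fresh epochs $l\in I_n$ at which $S_l$ is new, followed by $\lceil \beta n^2\rceil$ returns each within a \emph{bounded} window $k$ (per-return probability $P(T_0<T_b\wedge k)$, recovering $q$ as $k\to\infty$), with boundary membership imposed at the block end $u_n=\lceil \exp(n^2)\rceil$; Lemmas \ref{hh+} and \ref{hh} then give $EQ_n\ge c\exp(h_kn^2-2n)$ and $\mathrm{Var}(Q_n)\le C(EQ_n)^2 n^{-10}$, so Chebyshev errors are summable and Borel--Cantelli applies along the doubly exponential subsequence, whose consecutive log-ratio $(n-1)^2/n^2\to 1$ lets the backward-propagation inclusion transfer the bound to all intermediate times with only a $1+o(1)$ perturbation of $\delta$. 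Without such a variance bound that is smaller than $(EQ_n)^2$ by a summable factor --- not merely a bounded second-moment ratio --- your Borel--Cantelli step never starts; and for $d=2$ the hitting estimates feeding the second moment (e.g.\ $P(T_0\wedge T_b\ge n)\sim \pi/(2\log n)$ from \cite{okada}) are precisely the delicate input, available for the single-neighbor case of Theorem \ref{k2} but, as the paper notes, open for general $H$.
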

Next, we will extend these results to that of some general boundary. 
Fix $M\in \mathbb{N}$. 
Let ${\cal H}={\cal H}(M):=\{H\subset \mathbb{Z}^d: \{0\} \notin H, |H|\le M, P(T_x < T_H )>0 \quad \forall x \in H^c \}$. 
For any $H\subset \mathbb{Z}^d$ and $G\subset \mathbb{Z}^d$, let
$$\partial_H G:= \{y\in G: y+H \subset G^c \} .$$
Note that if we let  ${\cal N}(0)$ be the set of  neighbors of the origin, $\cup_{b \in {\cal N}(0)} \partial_{\{b\}} R(n)=\partial R(n)$ holds. 
In addition, we need the condition ``$P(T_x < T_H )>0$ $\forall x \in H^c$" in the proof of Lemma \ref{hh+} below. 
This condition verifies that $H$ does't not surround the origin. 
\begin{thm}\label{p0}
For $d\ge3$ and $H\in {\cal H}$
\begin{align*}
\lim_{n \to \infty} \frac{|\partial_H R(n)|}{n}=q \quad \text{ a.s.},
\end{align*}
where
\begin{align*}
q:=P((\{S'_m\}_{m=0}^\infty \cup \{S_m\}_{m=0}^\infty) \cap H=\emptyset \text{ and }0 \in \{S_m\}_{m=1}^\infty)
\end{align*}
and 
$\{S'_m\}_{m=0}^\infty$ denotes an independent copy of $\{-S_m\}_{m=0}^\infty$. 
\end{thm}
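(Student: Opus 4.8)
The plan is to represent $|\partial_H R(n)|$ as a first-visit sum and to run a first-moment-plus-variance argument, in the spirit of the law of large numbers for the range and of the boundary case \cite{okada}. First I would record the representation
\begin{align*}
|\partial_H R(n)|=\sum_{k=0}^{n}1_{\{S_k\notin\{S_0,\dots,S_{k-1}\}\}}\,1_{\{(S_k+H)\cap R(n)=\emptyset\}},
\end{align*}
in which each $y\in\partial_H R(n)$ is counted once, at the first time it is visited. Alongside this I would prove the elementary but crucial fact that $|\partial_H R(n)|$ has bounded increments: passing from $R(n)$ to $R(n+1)$ either leaves the range unchanged, or adds a single site $z=S_{n+1}$, which can create at most the one new boundary point $z$ and can destroy the $\partial_H$-membership of at most the $|H|\le M$ sites $z-h$ with $h\in H$; hence $\big|\,|\partial_H R(n+1)|-|\partial_H R(n)|\,\big|\le M+1$. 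This substitutes for the monotonicity that $\partial_H$ lacks.

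For the first moment I would fix $k$, condition on $S_k$, and split the trajectory at time $k$. By the Markov property the future forms an independent walk $\{S_m\}$, and by reversibility of the simple random walk the reversed past $\{S_{k-i}-S_k\}_{i\ge0}$ is an independent copy of $\{-S_m\}_{m\ge0}$, that is, exactly the walk $\{S'_m\}$ of the statement. Under this splitting the events ``$S_k$ is new'' and $(S_k+H)\cap R(n)=\emptyset$ factor into a condition on $\{S'_m\}$ (a return condition at the origin together with $\{S'_m\}\cap H=\emptyset$) and a condition on $\{S_m\}$ (namely $\{S_m\}\cap H=\emptyset$). For $k$ and $n-k$ both large, transience lets these events converge to the event defining $q$, so every bulk summand tends to $q$, while the $o(n)$ terms with $k$ near $0$ or $n$ are negligible; this gives $E[|\partial_H R(n)|]/n\to q$. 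Here I would invoke the hypothesis $P(T_x<T_H)>0$ for all $x\in H^c$, through Lemma \ref{hh+}, together with transience, to ensure $q>0$ so that the limit is a genuine positive density.

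The main obstacle is the almost sure statement, and for it I would first establish a variance bound $\mathrm{Var}\big(|\partial_H R(n)|\big)=O(n^{2-\delta})$ for some $\delta>0$. Expanding the variance as a double sum over first-visit times $k,k'$, the task is to show that the two indicator events decouple when $|k-k'|$ is large. This is delicate precisely because the avoidance event at a site $y=S_k$ is non-local: it constrains the whole path up to time $n$. The route is to localize it — whether $y+H$ is hit is governed by the excursions of the walk in a fixed neighbourhood of $y$ — and then to use that a transient walk which has travelled far from $y$ returns to a bounded neighbourhood of $y$ only with probability controlled by the Green's function, which decays like $|x|^{2-d}$. Turning this into a summable (after the polynomial loss) covariance bound is the technical heart of the proof, and it is exactly here that the restriction $d\ge3$ is used; for $d=2$ the walk is recurrent and the whole scheme breaks down.

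Finally I would upgrade convergence in probability to almost sure convergence. By Chebyshev's inequality the variance bound makes $\sum_j P\big(\big|\,|\partial_H R(n_j)|/n_j-q\,\big|>\varepsilon\big)$ finite along the polynomial subsequence $n_j=j^a$ with $a>1/\delta$, so Borel--Cantelli gives $|\partial_H R(n_j)|/n_j\to q$ a.s.; since $(n_{j+1}-n_j)/n_j\to0$ and the increments are bounded by $M+1$, the oscillation between consecutive $n_j$ is $o(n_j)$, and the limit propagates to the full sequence. As a robust alternative I would keep in reserve the inclusion--exclusion identity
\begin{align*}
|\partial_H R(n)|=\sum_{J\subseteq H}(-1)^{|J|}\,\big|\{y:\ y+(\{0\}\cup J)\subseteq R(n)\}\big|,
\end{align*}
which writes $|\partial_H R(n)|$ as a finite signed combination of the \emph{monotone} pattern-counts $V_F(n):=|\{y:\ y+F\subseteq R(n)\}|$, each with increments bounded by $M+1$. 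For monotone sequences the passage from a subsequence to the full sequence is immediate by squeezing, so this route reduces the theorem to the law of large numbers for each $V_F$, a statement of the same type as \cite{okada} that is again handled by the first-moment-plus-variance scheme above.
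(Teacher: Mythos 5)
Your plan is sound, and in outline it is the argument the paper itself appeals to: the paper gives no proof of Theorem \ref{p0} beyond the one-line remark that ``almost the same argument as in the proof of Theorem $2.1$ in \cite{okada}'' applies, and that argument is exactly the scheme you reconstruct -- a first-visit representation, time reversal to identify the mean density, and a transience-based second-moment decoupling. Two of your devices go beyond what the paper records and are worth keeping. First, the increment bound $\bigl|\,|\partial_H R(n+1)|-|\partial_H R(n)|\,\bigr|\le M+1$, which substitutes for the monotonicity that $|R(n)|$ enjoys and $|\partial_H R(n)|$ lacks, is what makes the subsequence-interpolation step legitimate. Second, your inclusion--exclusion identity over $J\subseteq H$, reducing $|\partial_H R(n)|$ to a signed sum of monotone pattern counts, is a genuinely different use of inclusion--exclusion from the paper's: the Remark after Theorem \ref{p0} applies it in the opposite direction (over subsets of ${\cal N}(0)$, to deduce Theorem $2.1$ of \cite{okada} from Theorem \ref{p0}), whereas you use it to manufacture monotonicity, which is the most robust route to the almost sure statement. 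Your variance step is only sketched, but the sketch is the right one: truncating each indicator to a time window of half-width $m$ comparable to the lag costs $O(m^{1-d/2})$ per term by the Green's function bound, so the covariance at lag $j$ is $O(j^{1-d/2})$ and $\mathrm{Var}(|\partial_H R(n)|)=O(n^{3/2})$ already in the worst case $d=3$, ample for your Chebyshev--Borel--Cantelli argument.

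One point needs care. Decomposing at the first visit yields a \emph{no-return} constraint on the reversed walk (what you call ``a return condition'' should read ``no-return condition''), so the density your argument actually produces is $P\bigl((\{S'_m\}_{m\ge0}\cup\{S_m\}_{m\ge0})\cap H=\emptyset \text{ and } 0\notin\{S_m\}_{m\ge1}\bigr)$, where the no-return condition can be moved from $S'$ to $S$ by exchangeability of the pair $(S',S)$. The statement as printed instead requires $0\in\{S_m\}_{m=1}^\infty$; taken literally this cannot be the limit (for a singleton $H$ and large $d$ the printed $q$ is at most the return probability, which tends to $0$ with $d$, while $|\partial_H R(n)|/n$ tends to $1$), so it is evidently a misprint for $0\notin\{S_m\}_{m=1}^\infty$, consistent with a last-visit decomposition. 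Your proof identifies the correct constant, but you should state the no-return event explicitly rather than asserting that your factorized event ``is the event defining $q$''. Finally, note that $q>0$ -- which follows from $P(T_H=\infty)>0$, the place where the hypothesis $P(T_x<T_H)>0$ for all $x\in H^c$ enters -- is needed only for the nontriviality of the limit, not for the convergence itself, so invoking Lemma \ref{hh+} there is unnecessary.
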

Compare this to Theorem $2.1$ in \cite{okada}. 
It is trivial that almost the same argument as in the proof of Theorem $2.1$ in \cite{okada}  yields this result. 
\begin{rem}
Note that any choice of $H\in {\cal H}$ does not realize $\partial _H G= \partial G$ in general. 
However, Theorem \ref{p0} yields  Theorem $2.1$ in \cite{okada} easily. 
Note that for $H_1$, $H_2\subset \mathbb{Z}^d$ and $G \subset \mathbb{Z}^d$
\begin{align*}
\partial_{H_1} G \cap \partial_{H_2} G 
= \partial_{H_1 \cup H_2} G.
\end{align*}
Set ${\cal U}_k := \{  A \subset {\cal N} (0) :  |A |= k \}$. 
Then, the inclusion-exclusion identity yields
\begin{align*}
\sum_{k=1}^{2d} 
\sum_{H \in {\cal U}_k }
(-1)^{k+1}
|\partial_H R (n) | = | \partial R (n) |.
\end{align*}
In addition, 
 \begin{align*}
&\sum_{k=1}^{2d} 
\sum_{H \in {\cal U}_k }
(-1)^{k+1}
P((\{S'_m\}_{m=0}^\infty \cup \{S_m\}_{m=0}^\infty) \cap H_k=\emptyset \text{ and }0 \in \{S_m\}_{m=1}^\infty)\\
=&P((\{S'_m\}_{m=0}^\infty \cup \{S_m\}_{m=0}^\infty) \not\supset {\cal N}(0) \text{ and }0 \in \{S_m\}_{m=1}^\infty). 
 \end{align*}
 Therefore, the desired result holds. 
\end{rem}
For $H\in {\cal H}$, we set $$M_H(n):=\max_{x \in \partial_H R(n)}K(n,x).$$
The following corresponds to the extension of Theorem \ref{k1}. 
\begin{thm}\label{p1}
For $d\ge3$ and $H\in {\cal H}$,
\begin{align*}
\lim_{n \to \infty} \frac{M_H(n)}{\log n}=\beta_d(H) \quad \text{ a.s.},
\end{align*}
where
\begin{align*}
\beta_d(H)=\frac{1}{- \log P(T_0<T_H)}. 
\end{align*}
\end{thm}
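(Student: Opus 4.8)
The plan is to follow the strategy used to establish Theorem \ref{k1} in \cite{okada1}, replacing the single neighbor $b$ by the finite set $H$ and the return probability $P(T_0<T_b)$ by $p:=P(T_0<T_H)$, so that $\beta_d(H)=-1/\log p$. The argument combines a first-moment (upper) bound with a second-moment (lower) bound, and then upgrades convergence in probability to an almost-sure statement via Borel--Cantelli along a geometric subsequence together with a monotonicity/interpolation argument to fill the gaps.

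\textbf{Step 1 (single-point asymptotics).} Fix $x$ and shift so that $x=0$. The key quantity is the probability that $x$ is an $H$-boundary point visited at least $k$ times, namely $P(x\in\partial_H R(n),\,K(n,x)\ge k)$. Applying the strong Markov property at successive visits to $x$, each excursion from $x$ that returns before the translated set $x+H$ is hit carries probability $p$, while the constraint $(x+H)\cap R(n)=\emptyset$ forces every excursion to return before hitting $x+H$ and the final escape to avoid $x+H$ as well. Using transience (so that the escape probability avoiding $H$ is positive, which is guaranteed by the condition $P(T_y<T_H)>0$ for $y\in H^c$), an excursion decomposition gives $P(x\in\partial_H R(n),\,K(n,x)\ge k)\asymp p^k$, up to multiplicative constants and a negligible time-truncation error; this is exactly the content of the auxiliary Lemma \ref{hh+}.

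\textbf{Step 2 (upper bound).} Let $\Theta_H(n,k)$ denote the number of points $x\in\partial_H R(n)$ with $K(n,x)\ge k$. Since the walk is transient, $|R(n)|$ is of order $n$, so summing Step 1 over candidate points yields $E[\Theta_H(n,k)]\asymp n\,p^k$. Choosing $k=\lceil(1+\varepsilon)\beta_d(H)\log n\rceil$ gives $p^k=n^{-(1+\varepsilon)}$, whence $E[\Theta_H(n,k)]\asymp n^{-\varepsilon}\to0$. Markov's inequality and Borel--Cantelli along $n_j=2^j$, combined with the monotonicity of $M_H$ between consecutive subsequence points, give $\limsup_n M_H(n)/\log n\le(1+\varepsilon)\beta_d(H)$ almost surely, and letting $\varepsilon\downarrow0$ completes the upper bound.

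\textbf{Step 3 (lower bound) and the main obstacle.} For $k=\lfloor(1-\varepsilon)\beta_d(H)\log n\rfloor$ one has $E[\Theta_H(n,k)]\asymp n^{\varepsilon}\to\infty$, and the goal is to show $\Theta_H(n,k)>0$ with probability tending to $1$ quickly enough for Borel--Cantelli. The natural tool is the second-moment method: estimate $E[\Theta_H(n,k)^2]$ by splitting the double sum over pairs $(x,y)$ according to $d(x,y)$, showing that well-separated pairs are asymptotically independent (each contributing $\asymp p^{2k}$) so that $E[\Theta_H(n,k)^2]\lesssim (E[\Theta_H(n,k)])^2$, which gives $P(\Theta_H(n,k)>0)\to1$ by Paley--Zygmund. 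The main difficulty, and the point where $H\in{\cal H}$ must be used carefully, is controlling the correlations for pairs whose avoidance sets $x+H$ and $y+H$ are close or overlap, and verifying that the two excursion systems at $x$ and $y$ decouple; this is more delicate than in the single-neighbor case of Theorem \ref{k1} precisely because $H$ may contain several points. Once this second-moment estimate is in place one obtains $\liminf_n M_H(n)/\log n\ge(1-\varepsilon)\beta_d(H)$ almost surely, and together with Step 2 this proves the theorem.
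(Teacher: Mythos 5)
Your overall skeleton (first-moment upper bound, second-moment lower bound, Borel--Cantelli along a subsequence plus interpolation) is the same as the paper's, which imports the scheme of \cite{okada1} with $b$ replaced by $H$. However, two specific steps fail as written. First, in Step 2 you invoke ``the monotonicity of $M_H$ between consecutive subsequence points,'' but $M_H(n)$ is \emph{not} monotone in $n$: the condition $x\in\partial_H R(n)$ can be destroyed at a later time when the walk enters $x+H$, so a point realizing the maximum at time $n\in[n_j,n_{j+1}]$ need not contribute to $M_H(n_{j+1})$, and your dyadic interpolation collapses. This is precisely the difficulty the paper's Lemma \ref{up} is designed to handle: using the inclusion property that for intervals $I_0\subset I_1\subset I_2$ one has $R(I_0)\cap \partial_H R(I_2)\subset \partial_H R(I_1)$, one counts instead the points $x$ lying in $\partial_H R(T_x^{\lceil \beta\log(n/2)\rceil})$, i.e.\ in the $H$-boundary \emph{at the time of their $\lceil \beta\log(n/2)\rceil$-th visit}; the event ``$x\in\partial_H R(m)$ and $K(m,x)\ge \lceil\beta\log(n/2)\rceil$ for some $m\le n$'' implies this monotone-friendly event, which restores the Borel--Cantelli argument. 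Without this (or an equivalent) device your upper bound is not proved.

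Second, in Step 3 the tool you cite does not deliver what you need: a bound $E[\Theta_H(n,k)^2]\lesssim (E[\Theta_H(n,k)])^2$ combined with Paley--Zygmund yields only $P(\Theta_H(n,k)>0)\ge c$ for a fixed constant $c>0$, not probabilities tending to $1$, and certainly not summably fast for Borel--Cantelli. The paper instead works at doubly exponential times $u_n=\lceil\exp(n^2)\rceil$ with the count $Q_n$ of \emph{fresh} times $l\in I_n$ (so $S_l\in R(l-1)^c$, avoiding double counting and enabling a clean Markov decomposition), and proves the quantitative variance saving of Lemma \ref{hh}, $\mathrm{Var}(Q_n)\le C\exp(2h_kn^2-4n)\,n^{-10}$, against the first-moment bound of Lemma \ref{hh+}, $EQ_n\ge c\exp(h_kn^2-2n)$; Chebyshev then gives failure probability $O(n^{-10})$, which is summable. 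Relatedly, you treat the time truncation as a ``negligible error,'' but it is structural: the events $A_{l,n}$ require the quick-return condition $E_{l,n}$ (each of the $\lceil\beta n^2\rceil$ returns within $k$ steps, at truncated rate $P(T_0<T_H\wedge k)$, with $k\to\infty$ taken only at the very end), precisely so that the candidate point has accrued all its visits by roughly time $u_{n-1}$ while remaining in $\partial_H R(u_n)$; via the inclusion property this yields the lower bound at \emph{all} intermediate times between $u_{n-1}$ and $u_n$, which your sketch does not address. Your Step 1 heuristic $P(x\in\partial_H R(n),\,K(n,x)\ge k)\asymp p^k$ is the right intuition (and you correctly identify that $H\in{\cal H}$ is needed to guarantee $P(T_0\wedge T_H=\infty),P(T_H=\infty)>0$ for the escape), but it is not ``exactly the content'' of Lemma \ref{hh+}, which is the first-moment bound for $Q_n$ including the truncation and freshness constraints.
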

To compare to $\Theta_n (\delta)$, we define $\Theta_n (\delta,H)$ for $n \in \mathbb{N}$ and $0<\delta<1$ as
\begin{align*}
\Theta_n(\delta,H):=|\{ x\in \partial_H R(n) :
\frac{K(n,x)}{\log n}\ge \beta_d(H)\delta \}|.
\end{align*}
In view of Theorem \ref{p1} together with Theorems \ref{k1} and \ref{k2}, we have a question : 
does the law of large numbers for $\Theta_n(\delta,H)$ hold?  
The following corresponds to the answer. 
\begin{thm}\label{p2}
For $d\ge3$, $H\in {\cal H}$ and $0<\delta<1$,
\begin{align*}
\lim_{n \to \infty} \frac{\log \Theta_n(\delta,H)}{\log n}=1-\delta \quad \text{ a.s.}
\end{align*}
\end{thm}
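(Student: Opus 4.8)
The plan is to follow the proof of Theorem \ref{k2} in \cite{okada1}, replacing the single neighbour $b$ by the set $H$, the constant $\beta_d$ by $\beta_d(H)$, and $\partial R(n)$ by $\partial_H R(n)$ throughout, and to establish the two matching polynomial bounds
$$\limsup_{n} \frac{\log \Theta_n(\delta,H)}{\log n} \le 1-\delta \quad\text{and}\quad \liminf_{n}\frac{\log \Theta_n(\delta,H)}{\log n}\ge 1-\delta \quad\text{a.s.}$$
separately. Writing $k_n := \lceil \beta_d(H)\delta \log n\rceil$, the combinatorial heart of both bounds is the single-point estimate
$$P\big(K(n,x)\ge k_n,\ x\in\partial_H R(n)\big) = n^{-\delta+o(1)}\, g_n(x),$$
valid uniformly for $x$ in the bulk of the range, where $\sum_x g_n(x) = n^{1+o(1)}$ (consistent with $E|\partial_H R(n)|\sim qn$ from Theorem \ref{p0}). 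This is exactly the point at which the hypothesis $P(T_x<T_H)>0$ for all $x\in H^c$ enters, through Lemma \ref{hh+}: decomposing the trajectory at successive visits to $x$, each excursion that returns to $x$ before hitting $x+H$ contributes a factor $P(T_0<T_H)$, so $k_n-1$ such returns produce $P(T_0<T_H)^{k_n}=n^{-\delta}$ by the definition of $\beta_d(H)$, while the non-surrounding condition guarantees that the initial approach to $x$ and the final escape avoiding $x+H$ carry only an $O(1)$ cost and that the resulting point genuinely lies in $\partial_H R(n)$.

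For the upper bound I would use the first moment. Summing the single-point estimate gives $E[\Theta_n(\delta,H)] = n^{1-\delta+o(1)}$, and Markov's inequality yields $P(\Theta_n(\delta,H)\ge n^{1-\delta+\epsilon}) \le n^{-\epsilon+o(1)}$ for every $\epsilon>0$. Passing to the geometric subsequence $n_j=2^j$ makes these probabilities summable, so Borel--Cantelli gives the bound along $\{n_j\}$; since $K(\cdot,x)$ is nondecreasing in $n$ one controls $\Theta_n$ for $n\in[n_j,n_{j+1}]$ by comparison with the endpoints $n_j,n_{j+1}$ (over a dyadic block the threshold $k_n$ changes only by $O(1)$ and the range by a bounded factor, both negligible on the logarithmic scale), which upgrades the bound to all $n$ and yields $\limsup_n \log\Theta_n(\delta,H)/\log n \le 1-\delta$ a.s.

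For the lower bound I would use the second moment method. Here one expands
$$E\big[\Theta_n(\delta,H)^2\big]=\sum_{x,y} P\big(x,y\in\partial_H R(n),\ K(n,x)\wedge K(n,y)\ge k_n\big)$$
and splits the sum according to $d(x,y)$. The diagonal and near-diagonal terms ($d(x,y)$ bounded, so that $x+H$ and $y+H$ may interact) are controlled crudely by $E[\Theta_n(\delta,H)]=n^{1-\delta+o(1)}$ times the bounded number of nearby partners, hence are $n^{1-\delta+o(1)}$ and negligible against $(E\Theta_n)^2=n^{2-2\delta+o(1)}$ precisely because $\delta<1$. For well-separated $x,y$ the two excursion systems decouple up to a $(1+o(1))$ factor, giving contribution $(1+o(1))(E\Theta_n(\delta,H))^2$, so that $E[\Theta_n^2]=(1+o(1))(E\Theta_n)^2$ (a bound $E[\Theta_n^2]=O((E\Theta_n)^2)$ would already suffice). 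Paley--Zygmund then gives $P(\Theta_n(\delta,H)\ge \tfrac12 E\Theta_n)\ge c>0$, i.e. $\Theta_n(\delta,H)\ge n^{1-\delta-o(1)}$ with probability bounded away from $0$. To promote this to an almost-sure statement I would partition space into boxes at scale $\sqrt{n}$ (or time into disjoint blocks) on which the relevant high-point counts are nearly independent, obtaining enough independent successes to apply Borel--Cantelli and conclude $\liminf_n \log\Theta_n(\delta,H)/\log n\ge 1-\delta$ a.s.

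The main obstacle is the second-moment estimate for nearby points: when $d(x,y)$ is of order $|H|$ the avoidance constraints $\{x+H\subset R(n)^c\}$ and $\{y+H\subset R(n)^c\}$ overlap and the two families of excursions are no longer independent, so the decoupling must be carried out carefully and the near-diagonal contribution bounded by hand. This is, however, exactly where the structural input of Lemma \ref{hh+} and the non-surrounding hypothesis $P(T_x<T_H)>0$ do their work, and once the single-point and pair estimates are in place the argument is parallel to that of Theorem \ref{k2}, with the factor $n^{-\delta}$ arising uniformly from $P(T_0<T_H)^{k_n}$ in every term.
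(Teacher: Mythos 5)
Your overall architecture (first moment plus Borel--Cantelli for the upper bound, second moment for the lower bound, with the non-surrounding hypothesis entering through the excursion factor $P(T_0<T_H)^{k_n}=n^{-\delta+o(1)}$) is the same as the paper's, which simply ports the proof of Theorem \ref{k2} from \cite{okada1} with $b$ replaced by $H$. However, two steps of your sketch would fail as written. First, in the upper bound you interpolate over a dyadic block ``by comparison with the endpoints $n_j,n_{j+1}$,'' but membership in $\partial_H R(n)$ is \emph{not} monotone in $n$: a point $x\in\partial_H R(n)$ with $n\in(n_j,n_{j+1})$ need not lie in $\partial_H R(n_j)$ (it may be unvisited at time $n_j$) nor in $\partial_H R(n_{j+1})$ (the walk may hit $x+H$ after time $n$), so neither endpoint dominates $\Theta_n(\delta,H)$. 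The paper's Lemma \ref{up} is formulated precisely to circumvent this: it counts points in the \emph{stopped} boundary $\partial_H R(T_x^{\lceil\beta\log(n/2)\rceil})$, using the inclusion $R(I_0)\cap\partial_H R(I_2)\subset\partial_H R(I_1)$ for nested intervals $I_0\subset I_1\subset I_2$, which converts boundary membership at a late time into boundary membership at the stopping time where the visit count is achieved; your sketch is missing this device.

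Second, your almost-sure upgrade of the lower bound rests on ``nearly independent'' high-point counts in spatial boxes at scale $\sqrt{n}$, but for a single walk the counts in different boxes are correlated through the path, and even the time-block variant still couples all blocks through the terminal constraint $ (x+H)\cap R(n)=\emptyset$ (a later block can hit $x+H$). The paper avoids Paley--Zygmund and any independence construction altogether: following \cite{okada1}, it counts \emph{fresh times} $l\in I_n$ at which $S_l\in R(l-1)^c\cap\partial_H R(u_n)$ with $\lceil\beta n^2\rceil$ quick returns (each gap $<k$, with $h_k=\beta\log P(T_0<T_H\wedge k)+1$ and $k\to\infty$ at the end), and proves in Lemmas \ref{hh+} and \ref{hh} that $\mathrm{Var}(Q_n)\le C(EQ_n)^2 n^{-10}$ up to constants, so Chebyshev plus Borel--Cantelli along the doubly exponential subsequence $u_n=\lceil\exp(n^2)\rceil$ (for which $\log u_n/\log u_{n-1}\to1$, making interpolation trivial at the logarithmic scale) gives the a.s. statement directly. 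Note also that the quick-return truncation $T_{S_l}^j-T_{S_l}^{j-1}<k$ is what controls the time budget --- your unconstrained excursion decomposition ignores that the $k_n$ visits must all occur by time $n$ --- and that this is where $P(T_0\wedge T_H=\infty)>0$ and $P(T_H=\infty)>0$, consequences of the hypothesis $P(T_x<T_H)>0$ for all $x\in H^c$, are actually used (in the analogue of Lemma $4.5$ of \cite{okada1}), rather than in the near-diagonal second-moment terms you emphasize.
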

\begin{rem}
We extend Theorems \ref{p1} and \ref{p2} to Theorems \ref{k1} and \ref{k2} 
by using the same argument as in Remark after Theorem \ref{p0}. 
\end{rem}
As we will see, the proof of Theorem \ref{p1} is a minor modification of that of Theorem \ref{k1}. 
Note that Theorem \ref{k2} follows from assertions developed in the proof of Theorem \ref{k1}. 
Analogously, Theorem \ref{p2} follows in the same manner once we proved Theorem \ref{p1}. 
Thus we argue only the proof of Theorem \ref{p1}. 

To show Theorem \ref{p1}, let $T_x^0=\inf\{j\ge0: S_j=x\}$ and for $p\ge1$,
 \begin{align*}
 T_x^p=\inf\{ j>T_x^{p-1}: S_j=x\}
 \end{align*}
with the convention  $\inf \emptyset =\infty$. 
To show the upper bound, we provide the following:
\begin{lmm}\label{up}
For $\beta>0$ and $ H\in {\cal H}$ there exists $C>0$ such that for any $n\in \mathbb{N}$
\begin{align*}
E[\tilde{\Theta}_n(\beta,H)]\le Cn^{1-\frac{\beta}{\beta_d(H)}},
\end{align*}
where 
\begin{align*}
\tilde{\Theta}_n(\beta,H)
:=|\{ x\in\partial_H R(T_x^{\lceil \beta\log  (n/2) \rceil }) :K(n,x)\ge \lceil \beta \log \frac{n}{2}\rceil \}|.
\end{align*}
\end{lmm}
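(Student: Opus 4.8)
The plan is to run a first-moment argument: expand $E[\tilde{\Theta}_n(\beta,H)]$ as a sum over sites of the probability that a given site is counted, and then extract a clean geometric factor from an excursion decomposition at the first visit to that site. Set $k:=\lceil \beta\log(n/2)\rceil$ and $p:=P(T_0<T_H)$, so that, by the definition of $\beta_d(H)$, $\log p=-1/\beta_d(H)$. By linearity of expectation,
\[
E[\tilde{\Theta}_n(\beta,H)]=\sum_{x\in\mathbb{Z}^d}P\big(x\in\partial_H R(T_x^{k}),\ K(n,x)\ge k\big).
\]

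The core of the argument is a pointwise estimate on each summand, for which I would peel the event down to two ingredients. First, $K(n,x)\ge k$ forces $x$ to have been visited by time $n$, so $\{K(n,x)\ge k\}\subseteq\{T_x^0\le n\}=\{x\in R(n)\}$. Second, $x\in\partial_H R(T_x^{k})$ demands $T_x^{k}<\infty$ together with $(x+H)\cap R(T_x^{k})=\emptyset$; in particular each of the first $k$ excursions of the walk from $x$ (after the first visit at time $T_x^0$) must return to $x$ before the walk ever meets $x+H$. Applying the strong Markov property at $T_x^0$ and then at each successive return $T_x^1,\dots,T_x^{k-1}$, these $k$ excursions are independent and each succeeds with probability $P^x(T_x<T_{x+H})=P(T_0<T_H)=p$ by translation invariance. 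Discarding the extra (only restrictive) requirement that $x+H$ be avoided already on $[0,T_x^0]$, this yields
\[
P\big(x\in\partial_H R(T_x^{k}),\ K(n,x)\ge k\big)\le P(x\in R(n))\,p^{k}.
\]

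It then remains to sum and simplify. Summing over $x$ and using the deterministic bound $\sum_{x}P(x\in R(n))=E[|R(n)|]\le n+1$ (no transience input is needed here) gives $E[\tilde{\Theta}_n(\beta,H)]\le (n+1)\,p^{k}$. Since $0<p<1$,
\[
p^{k}=p^{\lceil \beta\log(n/2)\rceil}\le p^{\beta\log(n/2)}=e^{\beta\log(n/2)\log p}=(n/2)^{-\beta/\beta_d(H)},
\]
whence $E[\tilde{\Theta}_n(\beta,H)]\le (n+1)\,2^{\beta/\beta_d(H)}n^{-\beta/\beta_d(H)}\le Cn^{1-\beta/\beta_d(H)}$, as required.

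The one point that demands genuine care is the excursion/factorization step: one must verify that the event $\{x\in\partial_H R(T_x^{k})\}$ really does imply ``$k$ successive returns to $x$, each before hitting $x+H$,'' and that the strong Markov property applied at the random times $T_x^0,\dots,T_x^{k-1}$ delivers exactly the independent factor $p^{k}$ with the correctly translated return-versus-escape probability $P(T_0<T_H)$. The remaining ingredients are routine: the defining condition $P(T_x<T_H)>0$ of ${\cal H}$ (taking $x=0\in H^c$) gives $p>0$, while transience for $d\ge3$ gives $p\le P(T_0<\infty)<1$, so that $\beta_d(H)\in(0,\infty)$ and the exponent $1-\beta/\beta_d(H)$ is meaningful; the ceiling function and the constant $2^{\beta/\beta_d(H)}$ are absorbed into $C$.
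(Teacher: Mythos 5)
Your route is in substance the same as the paper's: the paper proves this lemma simply by pointing to Lemma $3.1$ of \cite{okada1} (``change $\partial$ to $\partial_H$''), and that lemma rests on exactly the first-moment computation you carry out --- expand $E[\tilde{\Theta}_n(\beta,H)]$ as a sum over sites, apply the strong Markov property at the successive return times $T_x^0,T_x^1,\dots$ to extract a factor $p=P(T_0<T_H)$ per completed excursion avoiding $x+H$, and close with $\sum_x P(x\in R(n))=E[|R(n)|]\le n+1$ and $\log p=-1/\beta_d(H)$. So the skeleton is right, and your bookkeeping with the ceiling and the constant is fine.

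However, the step you yourself flagged contains a real inaccuracy. The event $\{x\in\partial_H R(T_x^{k})\}$ does \emph{not} demand $T_x^{k}<\infty$: if the walk visits $x$ exactly $k$ times (possible since the walk is transient for $d\ge 3$), then $T_x^{k}=\infty$, $R(T_x^{k})$ is the entire range, and $x\in\partial_H R(T_x^{k})$ holds as soon as $x+H$ is never visited at all. On this part of the event only $k-1$ excursions from $x$ are completed; the final excursion escapes to infinity without returning, and its probability factor is $P^x(T_x\wedge T_{x+H}=\infty)$, which need not be bounded by $p$ --- so your displayed pointwise inequality $P(x\in\partial_H R(T_x^{k}),\,K(n,x)\ge k)\le P(x\in R(n))\,p^{k}$ is not justified as written (in $d=3$, say, the escape probability can well exceed $p\le P(T_0<\infty)$). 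The repair is immediate and does not change the conclusion: on the joint event, $K(n,x)\ge k$ forces $T_x^{k-1}\le n$, hence $k-1$ completed excursions each returning to $x$ before the walk meets $x+H$, which yields the bound $P(x\in R(n))\,p^{k-1}$; since $p>0$ by the defining condition of ${\cal H}$ (take $x=0\in H^c$ in $P(T_x<T_H)>0$), the extra factor $p^{-1}$ is absorbed into $C$, and the stated bound $Cn^{1-\beta/\beta_d(H)}$ follows exactly as in your final display.
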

\begin{rem}
Note that Lemma \ref{up}  corresponds to Lemma $3.1$ in \cite{okada1}. 
Therefore, if we obtain Lemma \ref{up}, by the same argument as in the proof of Proposition $3.1$ in \cite{okada1}, 
 the desired upper bound follows with minor modifications. 
\end{rem}
\begin{proof}
First, we introduce the elementary property corresponding to $(3)$ in \cite{okada1}.  
For any intervals $I_0$, $I_1$, $I_2\subset {\mathbb N}\cup \{0\}$ with $I_0 \subset I_1 \subset I_2$, it holds that 
\begin{align*}
R(I_0)\cap \partial_H R (I_2)  \subset \partial_H R (I_1).
\end{align*}
Then, if we change $\partial$ in the proof of Lemma $3.1$ in \cite{okada1} to  $\partial_H$, 
we obtain the result with minor modifications. 
\end{proof}

Next, to show the lower bound, we will define the following. 
For $k\in {\mathbb{N}}$ and $\beta<\beta_d(H)$, let 
$h_k=\beta\log P(T_0<T_H\wedge k)+1$.  
Let $u_n:=\lceil \exp(n^2)\rceil$ and 
$$I_n:=[\frac{u_{n-1}}{n^2}, u_{n-1}-k \lceil \beta_d(H)  n^2\rceil]\cap {\mathbb{N}}.$$ 
For any $l\in I_n$, we introduce the event  $A_{l,n}$ defined by 
\begin{align*}
E_{l,n}:=\{ T_{S_l}^{j}-T_{S_l}^{j-1}<k \text{ for any }1\le j\le \lceil \beta n^2\rceil \},
\end{align*}
and
\begin{align*}
A_{l,n}:=\{S_l \in R(l-1)^c \cap \partial_H R(u_{n}) \}\cap E_{l,n}.
\end{align*}
Then, we set
\begin{align*}
Q_n:=\sum_{l \in I_n }1_{A_{l,n}}. 
\end{align*}
\begin{lmm} \label{hh+}
Let $\beta<\beta_d$ and take $k\in{\mathbb{N}}$.  
Then, there exists $c>0$ such that for any $n\in{\mathbb{N}}$, 
\begin{align*}
EQ_n \ge c\exp(h_kn^2-2n).
\end{align*}
\end{lmm}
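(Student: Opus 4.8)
The plan is to bound each $P(A_{l,n})$ from below by a quantity of order $\exp((h_k-1)n^2)$, uniformly in $l\in I_n$, and then to sum over the roughly $\exp(n^2-2n)$ admissible indices $l$. Fix $l\in I_n$ and write $x=S_l$. I decompose the trajectory along the three intervals $[0,l]$, $[l,T_x^{\lceil\beta n^2\rceil}]$ and $[T_x^{\lceil\beta n^2\rceil},u_n]$, and I intersect $A_{l,n}$ with the smaller event on which, in addition, the walk avoids the translate $x+H$ on each interval. Concretely I consider $\mathcal B_1=\{x\notin R(l-1)\}\cap\{S_j\notin x+H,\ 0\le j\le l\}$; the event $\mathcal B_2$ that each of the first $\lceil\beta n^2\rceil$ excursions of the walk away from $x$ after time $l$ returns to $x$ in fewer than $k$ steps and before visiting $x+H$; and the event $\mathcal B_3$ that after the $\lceil\beta n^2\rceil$-th return to $x$ the walk never visits $x+H$. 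On $\mathcal B_1\cap\mathcal B_2\cap\mathcal B_3$ the point $x$ is fresh, the inter-visit gaps are $<k$ (so $E_{l,n}$ holds), and $x+H$ is avoided throughout $[0,u_n]$, using that the excursions terminate before time $u_n$ because $l\le u_{n-1}-k\lceil\beta_d(H)n^2\rceil$ and $\beta<\beta_d(H)$. Hence $x\in R(l-1)^c\cap\partial_H R(u_n)$, so $\mathcal B_1\cap\mathcal B_2\cap\mathcal B_3\subset A_{l,n}$.

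Next I factorize. Applying the Markov property at time $l$ and the strong Markov property at the stopping time $T_x^{\lceil\beta n^2\rceil}$, and invoking translation invariance, the future events decouple from $\mathcal B_1$ and become independent of the random value $x$: the $\lceil\beta n^2\rceil$ excursions are i.i.d.\ with common success probability $P(T_0<T_H\wedge k)$, while $\mathcal B_3$ contributes the factor $P(T_H=\infty)$. This gives $P(A_{l,n})\ge P(\mathcal B_1)\,P(T_H=\infty)\,P(T_0<T_H\wedge k)^{\lceil\beta n^2\rceil}$. Here $P(T_H=\infty)>0$ and $P(T_0<T_H\wedge k)>0$ because $d\ge3$ and the defining condition of $\mathcal H$ prevents $H$ from surrounding the origin. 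For $\mathcal B_1$ I reverse time on $[0,l]$: the reversed path is a simple random walk started at $x$, and after translating $x$ to the origin $\mathcal B_1$ becomes $\{T_0>l\}\cap\{T_H>l\}$, so by transience $P(\mathcal B_1)=P(T_0\wedge T_H>l)\ge P(T_0=\infty,\ T_H=\infty)>0$ uniformly in $l$.

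Finally, since $\log P(T_0<T_H\wedge k)<0$ and $\lceil\beta n^2\rceil\le\beta n^2+1$, I get $P(T_0<T_H\wedge k)^{\lceil\beta n^2\rceil}\ge P(T_0<T_H\wedge k)\exp\bigl(\beta n^2\log P(T_0<T_H\wedge k)\bigr)=P(T_0<T_H\wedge k)\exp\bigl((h_k-1)n^2\bigr)$ by the definition $h_k=\beta\log P(T_0<T_H\wedge k)+1$. Combining the previous displays yields a constant $c_0>0$ with $P(A_{l,n})\ge c_0\exp((h_k-1)n^2)$ for every $l\in I_n$ and all large $n$. Counting the indices, $|I_n|\ge u_{n-1}(1-n^{-2})-k\lceil\beta_d(H)n^2\rceil\ge\tfrac14\exp(n^2-2n)$ for large $n$, so $EQ_n=\sum_{l\in I_n}P(A_{l,n})\ge\tfrac{c_0}{4}\exp(n^2-2n)\exp((h_k-1)n^2)=\tfrac{c_0}{4}\exp(h_kn^2-2n)$, which is the claim (with the finitely many small $n$ absorbed into the constant).

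The main obstacle is the bookkeeping forced by the random point $x=S_l$: one must set up the decomposition so that the ``past'' event $\mathcal B_1$ and the ``future'' events $\mathcal B_2,\mathcal B_3$ each admit a lower bound that is uniform in $x$, which is exactly what the time reversal on $[0,l]$ and the translation invariance of the excursion counts deliver. The subtle point is to keep the avoidance of $x+H$ consistent across all three intervals, so that $x$ is genuinely an $H$-boundary point of $R(u_n)$ while the excursion factor still contributes the full power $\lceil\beta n^2\rceil$.
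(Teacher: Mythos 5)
Your proof is correct and takes essentially the same route as the paper's, which simply defers to Lemmas $4.2$--$4.6$ of \cite{okada1} with $b$ replaced by $H$: a first-moment computation that splits each $A_{l,n}$ at time $l$, handles the past (fresh point plus avoidance of $S_l+H$) by time reversal, treats the returns as i.i.d.\ excursions with success probability $P(T_0<T_H\wedge k)$, uses exactly the transience facts $P(T_0\wedge T_H=\infty)>0$ and $P(T_H=\infty)>0$ that the paper flags as the role of the condition defining ${\cal H}$, and then sums over $|I_n|\asymp\exp(n^2-2n)$ indices. You have in effect written out, self-containedly, the deferred argument, and the bookkeeping (the inclusion ${\cal B}_1\cap{\cal B}_2\cap{\cal B}_3\subset A_{l,n}$, the factorization via the Markov property at $l$ and at $T_{S_l}^{\lceil\beta n^2\rceil}$, and the exponent arithmetic with $h_k$) all checks out.
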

\begin{lmm} \label{hh}
Let $\beta<\beta_d$ and take $k\in{\mathbb{N}}$.  
Then, there exists $C>0$ such that for any $n\in{\mathbb{N}}$, 
\begin{align*}
\mathrm{Var} (Q_n) \le C\exp(2h_kn^2-4n)\times \frac{1}{n^{10}}.
\end{align*}
\end{lmm}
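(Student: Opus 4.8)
The plan is to run a second-moment estimate: expand
$\mathrm{Var}(Q_n)=\sum_{l\in I_n}\mathrm{Var}(1_{A_{l,n}})+\sum_{l\neq l'}\mathrm{Cov}(1_{A_{l,n}},1_{A_{l',n}})$
and split the off-diagonal sum according to the separation $|l-l'|$. Throughout I take $k$ large enough that $h_k>0$ (possible since $\beta<\beta_d(H)$ forces $\beta\log P(T_0<T_H)>-1$, and $P(T_0<T_H\wedge k)\uparrow P(T_0<T_H)$). The diagonal is at most $\sum_l P(A_{l,n})=EQ_n$; combined with a matching upper bound $EQ_n\le C\exp(h_kn^2-2n)$ (from the same computation as Lemma \ref{hh+}, bounding each $P(A_{l,n})$ by the probability of $\lceil\beta n^2\rceil$ quick returns, $\approx P(T_0<T_H\wedge k)^{\lceil\beta n^2\rceil}$, times the number $|I_n|\approx\exp(n^2-2n)$ of indices), this is $\le C\exp(2h_kn^2-4n)/n^{10}$ as soon as $EQ_n\ge n^{10}$, which holds for large $n$ because $h_k>0$.

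Set the return window $w:=k\lceil\beta n^2\rceil$, so that on $E_{l,n}$ all returns to $S_l$ are completed within $[l,l+w]$, and $l+w\le u_{n-1}<u_n$ for $l\in I_n$. For \emph{near} pairs, $0<|l-l'|\le w$, I bound $\mathrm{Cov}\le P(A_{l,n}\cap A_{l',n})\le P(A_{l,n})$; since each $l$ has at most $2w$ near partners, their total contribution is $\le 2w\,EQ_n=2k\lceil\beta n^2\rceil EQ_n$, again $\le C\exp(2h_kn^2-4n)/n^{10}$ because $EQ_n$ grows faster than any power of $n$.

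The substance is the \emph{far} pairs, $l<l'$ with $l'-l>w$, for which I must prove the approximate factorization $\sum_{\mathrm{far}}\bigl(P(A_{l,n}\cap A_{l',n})-P(A_{l,n})P(A_{l',n})\bigr)\le C\exp(2h_kn^2-4n)/n^{10}$. Write $A_{l,n}=B_l\cap E_{l,n}\cap C_l$ with $B_l=\{S_l\notin R(l-1)\}$, $E_{l,n}$ the quick-return event, and $C_l=\{(S_l+H)\cap R(u_n)=\emptyset\}$. Conditioning on $\mathcal F_{l'}$, the events $B_l,E_{l,n},B_{l'}$ and the partial avoidances $\{(S_l+H)\cap R(l')=\emptyset\}$, $\{(S_{l'}+H)\cap R(l')=\emptyset\}$ are $\mathcal F_{l'}$-measurable (this is where $l+w\le l'$ is used), while the rest is carried by the post-$l'$ walk, which must avoid $(S_l+H)\cup(S_{l'}+H)$ up to time $u_n$ and perform the returns to $S_{l'}$. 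Dropping the extra requirement that the future avoid $S_l+H$ (which only raises the probability) and using \emph{translation invariance} to see that the surviving future factor $P^{S_{l'}}(\text{avoid }S_{l'}+H\text{ on }[0,u_n-l'],\ \text{quick returns})=:g_{n,l'}$ does not depend on the value of $S_{l'}$, I get $P(A_{l,n}\cap A_{l',n})\le g_{n,l'}\,E\bigl[1_{B_l}1_{E_{l,n}}1_{\{(S_l+H)\cap R(l')=\emptyset\}}1_{B_{l'}}1_{\{(S_{l'}+H)\cap R(l')=\emptyset\}}\bigr]$, where $g_{n,l'}$ is exactly the future factor appearing in $P(A_{l',n})$. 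A second use of the Markov property at time $l+w$, together with the relaxations $B_{l'}\subset\{S_{l'}\notin R([l+w,l'-1])\}$ and the dropping of the $S_l+H$-avoidance on $[l+w,l']$, factors the remaining expectation into a term depending on $[0,l+w]$ (matching the past part of $A_{l,n}$) and a translated, shorter-horizon copy of the past part of $A_{l',n}$.

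The hard part will be controlling the error in this factorization \emph{uniformly} enough that it survives summation over the $\approx u_{n-1}^2=\exp(2n^2-4n)$ far pairs and stays below the target $\exp(2h_kn^2-4n)/n^{10}$. Two sources of genuine long-range dependence must be tamed: (i) both avoidance events $C_l,C_{l'}$ live on the \emph{common} future interval up to $u_n$, and (ii) the new-point conditions $B_l,B_{l'}$ refer to the \emph{entire} past range. Transience ($d\ge3$) is what makes this work: the future factors $g_{n,l}$ converge to positive limits (transient avoidance probabilities of the type defining $q$ in Theorem \ref{p0}), the discrepancy between the horizons $l'-l-w$ and $l'$ contributes only a Green's-function-type correction, and the chance that the post-$(l+w)$ walk re-enters the already-explored region — which is precisely what couples the two past ranges — is summably small. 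Quantifying these corrections so their aggregate is $O(n^{-10})(EQ_n)^2$, rather than merely $o((EQ_n)^2)$, is the crux; it parallels the corresponding variance estimate in \cite{okada1}, and replacing $\partial$ by $\partial_H$ throughout (using the elementary inclusion from the proof of Lemma \ref{up}) leaves the argument intact.
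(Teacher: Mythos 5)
Your proposal takes essentially the same route as the paper: the paper proves this lemma simply by importing the second-moment machinery of \cite{okada1} (Lemma $4.4$, Corollary $4.1$, Lemmas $4.5$ and $4.6$, plus summation over $I_n$) with $b$ replaced by $H$, and your diagonal/near-pair/far-pair decomposition with the Markov-property factorization at times $l'$ and $l+w$ is exactly that machinery, with your deferral of the uniform $O(n^{-10})$ error control to the parallel estimate in \cite{okada1} matching the paper's own level of detail (the paper likewise only notes that $P(T_0\wedge T_H=\infty),P(T_H=\infty)>0$, guaranteed by the defining condition of ${\cal H}$, is what makes the analogue of Lemma $4.5$ go through). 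Your added assumption that $k$ is large enough that $h_k>0$ is a legitimate, indeed necessary, reading of the statement --- if $h_k\le 0$ the diagonal term alone is of order $EQ_n\ge c\exp(h_kn^2-2n)\gg \exp(2h_kn^2-4n)/n^{10}$, so the bound as literally stated would fail --- and it is consistent with how $k$ is ultimately chosen in the application of Lemmas \ref{hh+} and \ref{hh}.
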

\begin{rem}
Note that Lemmas \ref{hh+} and \ref{hh}  correspond to Lemmas $4.2$ and $4.3$ in \cite{okada1}. 
Therefore, if we change $\partial_b$  in the proof of Proposition $4.1$ in \cite{okada1} to  $\partial_H$, 
Lemmas \ref{hh+} and \ref{hh} yield the lower bound with minor modifications. 
\end{rem}
\begin{proof}[Proof of Lemmas \ref{hh+} and \ref{hh}]
Again, note that Lemmas \ref{hh+} and \ref{hh}  correspond to Lemmas $4.2$ and $4.3$ in \cite{okada1}. 
Especially, we need the condition ``$P(T_x < T_H )>0$ $\forall x \in H^c$" in ${\cal H}$ in the proof of Lemma \ref{hh+}. 
This condition yields $P(T_0 \wedge T_H=\infty),P( T_H=\infty)>0$.  
Lemma $4.4$, Corollary $4.1$, Lemmas $4.5$ and $4.6$ and the simple summation over $I_n$ in \cite{okada1} yield Lemmas $4.2$ and $4.3$ in \cite{okada1}. 
If we change $b$  in the proof  of \cite{okada1} to  $H$, 
we obtain results corresponding to Lemma $4.4$, Corollary $4.1$, Lemmas $4.5$ and $4.6$ in \cite{okada1} with minor modifications. 
Especially, the facts $P(T_0 \wedge T_H=\infty),P( T_H=\infty)>0$ are used to show the assertion corresponding to Lemma $4.5$. 
Other arguments are same as that of \cite{okada1}. 
Therefore, desired results follow. 
 \end{proof}
 \begin{proof}[Proof of Theorem \ref{p1}]
 If we change $b$  in the proof  of \cite{okada1} to  $H$, 
we obtain results corresponding to Proposition $4.1$ in \cite{okada1} with minor modifications. 
\end{proof}
Now, we provide two open problems for $d=2$. 
\begin{open}
For $d=2$, is it true that results corresponding to Theorems \ref{p1} and \ref{p2} hold?
\end{open}
We believe that it is true and explain why it is difficult to solve it. 
If we solve the lower bound of this by the same argument as \cite{okada1}, 
we need to estimate the following: 
 for any $n\in \mathbb{N}$, 
 \begin{align*}
P(T_H > \lceil \frac{u_n}{n} \rceil),\quad
P(T_0\wedge T_H > \lceil \frac{u_n}{n} \rceil)
\end{align*}
and  for any $n\in \mathbb{N}$ and $x\in {\mathbb{Z}^{2}}$  
with $0<|x|< n\sqrt{u_{n-1}}$, 
 \begin{align*}
P(T_H \wedge T_{x+H}> \lceil \frac{u_n}{n} \rceil).
\end{align*}
Note that these correspond to (52), (53) or Lemma $4.7$ in \cite{okada1}. 
In Corollary 2 and (1.2)  of \cite{kesten}, they showed that for $H\subset \mathbb{Z}^2$,
\begin{align*}
\sum_{i\in H} P^i (T_H\ge n) \sim \frac{\pi}{\log n}, 
\end{align*}
where $a_n \sim c_n$ means $a_n/c_n \to 1$ $(n\to \infty)$ for sequences $a_n$ and $c_n$. 
In addition, \cite{okada} yields 
\begin{align*}
P^i (T_0 \wedge T_b \ge n) \sim \frac{\pi}{2\log n}, 
\end{align*}
where $b$ is a neighbor of the origin. 
However, it is difficult to estimate the individual term for a general $H\subset \mathbb{Z}^2$, 
that is, $P^i (T_H\ge n)$ for $i\in H$. 

In addition, we will consider a certain extension of Theorem \ref{k1} for $d=2$. 
\begin{open}
Consider $\tilde{D}_n\in  {\cal H}$  with $\tilde{D}_n\subset D(0, a_n)^c$ and $a_n \to \infty$ as $n \to \infty$. 
Then, how is the asymptotic of $M_{\tilde{D}_n}(n)$ with probability one? 
\end{open}
We are especially interested in the case 
that $\tilde{D}_n\in  {\cal H}$ with $\tilde{D}_n\subset D(0, a_n)^c \cap D(0, 2a_n)$ and  $a_n=n^{\beta}$ for $0<\beta<1/2$. 
In fact, \cite{Dembo1} showed the largest disc completely covered until $\tau_n$ by the simple random walk in $\mathbb{Z}^2$ is $n^{1/2+o(1)}$. 
Then, we conjecture that this argument yields that if we set $a_n\ge n^{1/2+\epsilon}$ for some $\epsilon>0$, it holds that 
\begin{align*}
M_{\tilde{D}_n}(n)=\max_{x\in \mathbb{Z}^2} K(\tau_n,x) \quad 
\text{for all sufficiently large }n\in \mathbb{N}\quad \text{ a.s.}
\end{align*}

\section{Some estimates for favorite points and late points of simple random walks and high points of Gaussian free fields  in two dimensions}
First, we explain the high points of the Gaussian free field in $\mathbb{Z}^2_n$. 
Originally, 
Bolthausen, Deuschel and Giacomin \cite{Bol} showed that in probability
\begin{align*}
\lim_{n \to \infty}\frac{ \max_{x \in \mathbb{Z}_n^2 }\phi_n(x)}{\log n}=2\sqrt{\frac{2}{\pi}},
\end{align*}
where $\{\phi_n(x)\}_{x \in \mathbb{Z}_n^2}$ is the Gaussian free field defined in \cite{Bol,ol}. 
In what follows, for $0<\alpha<1$, we define the set of $\alpha$-high points of the Gaussian free field by
\begin{align*}
{\cal V}_n(\alpha):=\bigg\{ x\in {\mathbb{Z}^2_n} :
\frac{\phi_n(x)^2}{2} \ge \frac{4\alpha}{\pi} (\log n)^2 \bigg\}.
\end{align*}

Next, we explain late points of a simple random walk in $\mathbb{Z}^2_n$. 
Originally, 
Dembo, Peres, Rosen and Zeitouni \cite{Dembo3} showed that
for a simple random walk in ${\mathbb{Z}^2_n}$ in probability
\begin{align*}
\lim_{n \to \infty} \frac{\max_{x\in {\mathbb Z}^2_n}T_x }{(n\log n)^2}= \frac{4}{\pi}.
\end{align*}
(Now, we consider $T_x$ as the stopping time to $x$ of a simple random walk in $\mathbb{Z}^2_n$.) 
In what follows, for $0<\alpha<1$, we define the set of $\alpha$-late points in $\mathbb{Z}^2_n$ 
such that 
\begin{align*}
{\cal L}_n(\alpha):=\bigg\{ x\in {\mathbb{Z}^2_n} :
\frac{T_x}{(n\log n)^2}\ge \frac{4\alpha}{\pi} \bigg\}.
\end{align*}

For any $0< \alpha,\beta <1$ and $\{G_n\}_{n=1}^{\infty}$ with $G_n\subset \mathbb{Z}^2$ (or $\mathbb{Z}_n^2$), let 
\begin{align*}
&Q_j=Q_j (G_n):=\lim_{n\to \infty}\frac{\log 
|\{ (x_1,...,x_j)\in G_n^j:d(x_i,x_l)\le n^{\beta} \text{ for any }1\le i,l \le j \}|}{\log n}\\
&\hat{Q}_j=\hat{Q}_j (G_n):=\lim_{n\to \infty}\frac{\log 
E[|\{ (x_1,...,x_j)\in G_n^j:d(x_i,x_l)\le n^{\beta} \text{ for any }1\le i,l \le j \}||]}{\log n}
\end{align*}
if the right hand sides exist. 
For any $0< \alpha,\beta <1$, set
\begin{align*}
&\rho_2(\alpha, \beta):=
\begin{cases}
 2+2\beta-\frac{4\alpha}{2-\beta}&(\beta\le 2(1-\sqrt{\alpha}))
\\
8(1-\sqrt{\alpha})-4(1-\sqrt{\alpha})^2/\beta&(\beta\ge 2(1-\sqrt{\alpha})),
\end{cases}
\\
&\hat{\rho}_2(\alpha, \beta):=
\begin{cases}
2+2\beta-\frac{4\alpha}{2-\beta}&(\beta\le 2-\sqrt{2\alpha})
\\
 6-4\sqrt{2\alpha}&(\beta\ge 2-\sqrt{2\alpha}).
\end{cases}
\end{align*}
All the known results for $Q_2({\cal L}_n(\alpha))$, $Q_2({\cal V}_n(\alpha))$ and $Q_2(\Psi_n(\alpha))$ (or the corresponding ones for $\hat{Q}_2$) are summarized in Table $1$ below.
\begin{table}[h]
\caption{Known results for $Q_j$ and $\hat{Q}_j$}
  \begin{tabular}{|l||c|c|c|c|} \hline
     &  $Q_2$ in prob. & $\hat{Q}_2$ & $Q_j$ in prob.  $\forall  j\in \mathbb{N}$ & $\hat{Q}_j$ $\forall  j\in \mathbb{N}$ \\ \hline \hline
    ${\cal L}_n(\alpha)$  &$\rho_2(\alpha, \beta)$ : \cite{Dembo2} &$\hat{\rho}_2(\alpha, \beta)$ : \cite{ho}  & Open prob. by \cite{Dembo*, Dembo**} & Unsolved \\ \hline 
    ${\cal V}_n(\alpha)$  & $\rho_2(\alpha, \beta)$ : \cite{ol} &$\hat{\rho}_2(\alpha, \beta)$ : \cite{ol} &Unsolved &Unsolved \\ \hline  
    ${\Psi}_n(\alpha)$  & Open prob. by \cite{Dembo2}& Unsolved &Unsolved &Unsolved \\ \hline
  \end{tabular}
\end{table}

If we write $"A: [b]"$ in the above table, the value of $Q_2$ (or $\hat{Q}_2$) is identified with 
$A$ and $[b]$ is the reference which solved this problem. 
In \cite{okada2}, we solve the problems concerning $Q_2(\Psi_n(\alpha))$ or $\hat{Q}_2(\Psi_n(\alpha))$. 
Our result shows that all three exponents in almost sure sense (that is, $Q_2$) coincide with one another. 
We also estimate this value in average (that is, $\hat{Q}_2$) and obtain a similar coincidence. 
In addition, \cite{okada3} solves the problems concerning $Q_j({\cal L}_n(\alpha))$ or $\hat{Q}_j({\cal L}_n(\alpha))$ for $j \in \mathbb{N}$. 
As stated in Theorem $1.1$,  \cite{Eisen} gave a powerful equivalence in law.  
Ding, Lee, and Peres \cite{Ding1, Ding2} gave a strong connection between the expected maximum of the Gaussian free field and the expected cover time. 
Then, we are interested in the stronger relation between $\Psi_n(\alpha)$ and $ {\cal V}_n(\alpha)$ (or  ${\cal L}_n(\alpha)$) and 
search for the difference and the similarity. 
We suggest the following problem. 
\begin{open}
Consider the slightly different Gaussian free field from that defined by \cite{Bol, ol}, 
which is defined on $D(0,n)$ and whose covariance is $E^x[K(\tau_n,y)]$ for $x,y\in D(0,n)$. 
In addition, consider the corresponding ${\cal V}_n(\alpha)$. 
Is there the sequence of the coupling of the Gaussian free field in $D(0,n)$ and simple random walk in $\mathbb{Z}^2$ such that 
for any $\epsilon>0$, $\epsilon<\alpha<1$ and all sufficiently large $n\in \mathbb{N}$, 
\begin{align}\label{g*}
&P(\Psi_n(\alpha)\subset {\cal V}_n(\alpha-\epsilon) )=1\\
\label{g**}
\text{or }\quad&P( {\cal V}_n(\alpha) \subset\Psi_n(\alpha-\epsilon) )=1?
\end{align}
\end{open}


\section*{Acknowledgments. }
We are grateful to Prof. Kazumasa Kuwada for interesting discussion. 
In addition,  we thank referees for careful reading our manuscript and for giving useful comments.

\end{document}